\numberwithin{equation}{thm}
\newcommand{\eff}[0]{{e\!f\!f}}
\newcommand{\dmeff}[0]{{\rm DM}^\eff({\bf C})}
\newcommand{\dmeffl}[0]{{\rm DM}^\eff({\bf C},{\bf Q})}
\begin{document}
\title{Intermediate Jacobians and the slice filtration}
\author{Doosung Park}
\date{\today}
\address{Institut f\"ur Mathematik, Universit\"at Z\"urich, Winterthurerstr. 190, 8057 Z\"urich, Switzerland}
\email{doosung,park@math.uzh.ch}
\subjclass[2010]{14C15}
\keywords{intermediate Jacobians, motives}
\begin{abstract}
For every $n$-dimensional smooth projective variety $X$ over ${\bf C}$, the motive $M(X)$ is expected to admit a Chow-K\"unneth decomposition $M_0(X)\oplus\cdots \oplus M_{2n}(X)$.
Inspired by the slice filtration of $M(X)$ we propose the definitions of $M_2(X)$ and $M_{2n-2}(X)$.
In our construction we use intermediate Jacobians.
\end{abstract}
\maketitle
\section{Introduction}
Grothendieck entertained the notion of mixed motives as a universal cohomology theory, which would encompass the Betti, $\ell$-adic, de Rham, and crystalline cohomology theories. While the category of mixed motives ${\rm MM}$ remains elusive, there are several constructions of a triangulated category of mixed motives ${\rm D}({\rm MM})$ suggested independently by Hanamura, Levine, and Voevodsky.
In this paper, we use Voevodsky's construction of effective motives ${\rm DM}^{e\! f\! f}(k,\Lambda)$ \cite{MVW}, where $k$ is a field and $\Lambda$ is a coefficient ring.\vspace{0.1in}

The problem of constructing the category of mixed motives $\mathrm{MM}$ is closely related to Grothendieck's standard conjectures on algebraic cycles \cite{MR0268189}.  One of his conjectures, the K\"unneth-type standard conjecture, can be reformulated in terms of $\dmeffl$ when the base field is ${\bf C}$ as follows:
For every $n$-dimensional smooth projective variety $X$ over ${\bf C}$ the motive $M(X)$ admits a decomposition
\[
M(X)
\cong
M_0(X)\oplus M_1(X)\oplus \cdots \oplus M_{2n}(X)
\]
such that for every integer $d\in [0,2n]$ the composite morphism
\[
M(X)\to M_d(X)\to M(X)
\]
corresponds to the obvious composite morphism
\[
\bigoplus_{i=0}^{2n} H_{\rm Betti}^i(X,{\bf Q})
\to
H_{\rm Betti}^d(X,{\bf Q})
\to
\bigoplus_{i=0}^{2n} H_{\rm Betti}^i(X,{\bf Q}).
\]
The decomposition is called a \emph{Chow-K\"unneth decomposition} of $M(X)$.
The constructions of $M_0(X)$ and $M_{2n}(X)$ are straightforward, and Murre constructed $M_1(X)$ and $M_{2n-1}(X)$ in \cite{MR1061525}.
We also refer to \cite{MR1265529} for a survey of the theory of classical motives.\vspace{0.1in}
The construction of $M_d(X)$ for general $d$ is unknown.

In this paper we propose the definitions of $M_2(X)$ and $M_{2n-2}(X)$.
Our method is inspired by the slice filtration \cite{MR2249535} of $M(X)$:
\begin{align*}
\underline{\rm Hom}({\bf L}^{n},M(X))\otimes {\bf L}^n \rightarrow & \underline{\rm Hom}({\bf L}^{n-1},M(X))\otimes {\bf L}^{n-1}
\\
\rightarrow & \cdots \rightarrow \underline{\rm Hom}({\bf 1},M(X))=M(X).
\end{align*}
Here, $\underline{\rm Hom}$ denotes the internal Hom of $\dmeffl$, and
\[
{\bf 1}:=M({\rm Spec}\,k),\quad {\bf L}:={\bf 1}(1)[2].
\]
For every integer $i\geq 0$ a Chow-K\"unneth decomposition would naturally induce a decomposition
\[
\underline{\rm Hom}({\bf L}^i,M(X))
\cong
\underline{\rm Hom}({\bf L}^i,M_0(X))
\oplus
\cdots
\oplus
\underline{\rm Hom}({\bf L}^i,M_{2n}(X)).
\]
We expect that there is an isomorphism
\[
\underline{\rm Hom}({\bf L}^i,M_{n+i}(X))\cong M_{n-i}(X).
\]
Hence to propose the definition of $M_{n-i}(X)$ we may need to identify which components of the motive $\underline{\rm Hom}({\bf L}^i,M(X))$ correspond to the motives $\underline{\rm Hom}({\bf L}^i,M_j(X))$ for $j\in [0,n+i)\cup (n+i,2n]$.
This method is used to produce $M_1(X)$.
More precisely, there is a decomposition
\[
\underline{\rm Hom}({\bf L}^{n-1},M(X))
\cong
{\rm NS}_{\bf Q}(X)\oplus {\rm Pic}_{\bf Q}^0(X)\oplus {\bf L}.
\]
The summand ${\rm Pic}_{\bf Q}^0(X)$ was identifies as $M_1(X)$.
We generalize this decomposition to the case $i=n-2$, which is achieved by the following two theorems.

\begin{thm}\label{0.1}
Let $X$ be an $n$-dimensional smooth projective variety over $\mathbf{C}$, and let $d\in [1,n]$ be an integer.
Then
\begin{enumerate}[{\rm (1)}]
\item
${\rm NS}_{hom,{\bf Q}}^d(X)\oplus {\rm Griff}_{\bf Q}^d(X)$ is a direct summand of $\underline{\rm Hom}(\mathbf{L}^{n-d},M(X))$ in $\dmeffl$,
\item
$J_{a,{\bf Q}}^d(X)$ is a direct summand of $\underline{\rm Hom}(\mathbf{L}^{n-d},M(X))$ in $\dmeffl$.
\end{enumerate}
\end{thm}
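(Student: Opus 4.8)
The plan is to realize the three groups as images of mutually orthogonal idempotents of $N:=\underline{\mathrm{Hom}}({\bf L}^{n-d},M(X))$ in $\dmeffl$, constructed so as to respect the codimension filtration on cycles. Since the embedding $\dmeffl\hookrightarrow {\rm DM}({\rm Spec}\,{\bf C},{\bf Q})$ is fully faithful by Voevodsky's cancellation theorem, I may compute morphism groups after inverting ${\bf L}$, where Poincar\'e duality $M(X)^\vee\cong M(X)(-n)[-2n]$ gives $N\cong M(X)\otimes{\bf L}^{d-n}$ and hence $\mathrm{Hom}_{\dmeffl}({\bf 1},N)={\rm CH}^d(X)_{\bf Q}$ and $\mathrm{Hom}_{\dmeffl}(N,{\bf 1})={\rm CH}^{n-d}(X)_{\bf Q}$, with the composition pairing $\mathrm{Hom}({\bf 1},N)\times\mathrm{Hom}(N,{\bf 1})\to{\bf Q}$ equal to the intersection-degree pairing of cycles. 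On $\mathrm{Hom}({\bf 1},N)={\rm CH}^d(X)_{\bf Q}$ sits the two-step filtration ${\rm CH}^d_{alg}(X)_{\bf Q}\subseteq{\rm CH}^d_{hom}(X)_{\bf Q}\subseteq{\rm CH}^d(X)_{\bf Q}$ whose graded pieces are, respectively, $J^d_{a,{\bf Q}}(X)$, ${\rm Griff}^d_{\bf Q}(X)$ and ${\rm NS}^d_{hom,{\bf Q}}(X)$; the task is to split $N$ compatibly with it.

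For the N\'eron--Severi piece, choose codimension-$d$ cycles $Z_1,\dots,Z_r$ whose classes form a basis of ${\rm NS}^d_{hom,{\bf Q}}(X)$, giving maps $\alpha_i\colon{\bf 1}\to N$. The key input is that the intersection pairing ${\rm NS}^d_{hom,{\bf Q}}(X)\times{\rm NS}^{n-d}_{hom,{\bf Q}}(X)\to{\bf Q}$ between algebraic classes of complementary codimension is non-degenerate; over ${\bf C}$ this follows from Hard Lefschetz and the Hodge--Riemann bilinear relations, the Lefschetz operator being an algebraic class. Non-degeneracy produces codimension-$(n-d)$ cycles $W_j$ with $\deg(Z_i\cdot W_j)=\delta_{ij}$, hence maps $\beta_j\colon N\to{\bf 1}$ with $\beta_j\alpha_i=\delta_{ij}$, so that $\sum_i\alpha_i\beta_i$ is an idempotent of $N$ with image ${\bf 1}^{\oplus r}\cong{\rm NS}^d_{hom,{\bf Q}}(X)$. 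This genuine use of Hodge theory is one reason the hypothesis $k={\bf C}$ cannot be dropped.

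Part (2) is where intermediate Jacobians enter. I would use the universal regular homomorphism ${\rm CH}^d_{alg}(X)\to J^d_a(X)$, which exists over ${\bf C}$, and encode its universal family of algebraically trivial cycles as a correspondence $\gamma\in{\rm CH}^d(J^d_a(X)\times X)_{\bf Q}$ inducing a morphism $M_1(J^d_a(X))_{\bf Q}\to N$. Here I invoke the full faithfulness of $A\mapsto M_1(A)_{\bf Q}$ from abelian varieties up to isogeny into $\dmeffl$ (Orgogozo, Barbieri-Viale--Kahn), so that $\mathrm{End}_{\dmeffl}(M_1(J^d_a)_{\bf Q})=\mathrm{End}(J^d_a)_{\bf Q}$. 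A retraction $N\to M_1(J^d_a(X))_{\bf Q}$ comes from a second correspondence inverting the Abel--Jacobi map on the abelian-variety part; that the composite is the identity need only be checked on $\mathrm{End}(J^d_a)$, where it holds by the universal property. This exhibits $J^d_{a,{\bf Q}}(X)=M_1(J^d_a(X))_{\bf Q}$ as a summand realizing ${\rm CH}^d_{alg}(X)_{\bf Q}$.

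I expect the Griffiths piece to be the main obstacle. Because ${\rm Griff}^d_{\bf Q}(X)$ consists of homologically trivial classes it is invisible to the intersection pairing, so the retraction used for ${\rm NS}$ is unavailable, and since it may have infinite rank it cannot be cut out by finitely many cycles. My plan is to obtain it as a residual summand: after constructing the ${\rm NS}$ and $J^d_a$ projectors and orthogonalizing them with respect to the filtration, the complementary idempotent has image $N''$ with $\mathrm{Hom}({\bf 1},N'')={\rm Griff}^d_{\bf Q}(X)$, and it remains to prove that $N''$ is constant, i.e. an (a priori infinite) coproduct of copies of ${\bf 1}$. Establishing this --- presumably through a control of ${\rm Ext}$-groups in $\dmeffl$ via the homotopy $t$-structure, whose heart organizes the lattice-type and abelian-variety-type constituents of $1$-motives --- so as to rule out higher-weight contributions, together with verifying that all the constructed maps are genuine morphisms in $\dmeffl$ and stay effective, is the delicate part of the argument.
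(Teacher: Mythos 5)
Your argument for the N\'eron--Severi piece rests on the claim that the intersection pairing ${\rm NS}^d_{hom,{\bf Q}}(X)\times{\rm NS}^{n-d}_{hom,{\bf Q}}(X)\rightarrow{\bf Q}$ is non-degenerate and that this follows from Hard Lefschetz and the Hodge--Riemann relations. That is not a theorem: non-degeneracy of this pairing is precisely the standard conjecture $D(X)$ (homological equals numerical equivalence), which is open for general $d$ even over ${\bf C}$. The Hodge--Riemann relations give definiteness only on the primitive pieces of the Lefschetz decomposition, and to convert that into non-degeneracy on \emph{algebraic} classes you would need the dual class (equivalently, the inverse Lefschetz operator applied to an algebraic class) to be algebraic, i.e.\ conjecture $B(X)$ or the Hodge conjecture; so the retractions $\beta_j$ are only available in the known cases ($d=1$, $d=n$, surfaces, abelian varieties, \dots). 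The second gap is the one you flag yourself: the Griffiths piece is never constructed, and exhibiting it as a ``residual constant summand'' is exactly the hard step you leave open. The idea you are missing is that no pairing is needed at all: the paper splits off the whole of ${\rm NS}^d_{alg,{\bf Q}}(X)\cong{\rm NS}^d_{hom,{\bf Q}}(X)\oplus{\rm Griff}^d_{\bf Q}(X)$ in one stroke. Since $h_0(\underline{\rm Hom}({\bf L}^{n-d},M(X)))\cong CH^d_X$ for the homotopy $t$-structure and the quotient ${\rm NS}^d_{alg,X}$ is the \emph{constant} sheaf with value ${\rm NS}^d_{alg}(X)$, one gets a morphism to $\bigoplus_{i\in I}{\bf 1}$ for any basis $\{a_i\}_{i\in I}$ of this ${\bf Q}$-vector space, and a section is obtained by merely lifting each $a_i$ to a cycle $b_i\in CH^d_{\bf Q}(X)={\rm Hom}_{\dmeffl}({\bf 1},\underline{\rm Hom}({\bf L}^{n-d},M(X)))$. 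This uses only that ${\bf Q}$-vector spaces are free, and it handles the (possibly infinite-dimensional) Griffiths part with no extra work.

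For part (2) your strategy is close in spirit to the paper's, but it presupposes a correspondence in $CH^d(J^d_a(X)\times X)_{\bf Q}$ inducing the identity (up to isogeny) on $J^d_a(X)$; the existence of such a cycle parametrized by the intermediate Jacobian itself is not justified (for $d=2$ this touches the delicate question of universal codimension-two cycles). The paper instead quotes Vial's result that some smooth projective curve $C$ carries a cycle $Z\in CH^d(C\times X)$ with $\psi_Z:{\rm Alb}(C)\rightarrow J^d_a(X)$ surjective, splits $\psi_{Z,{\bf Q}}$ by semisimplicity of abelian varieties up to isogeny, and uses that ${\rm Alb}_{\bf Q}(C)$ is a direct summand of $M(C)$; it also verifies that the Abel--Jacobi maps assemble into a morphism of presheaves with transfers on $Sm/{\bf C}$, a compatibility your sketch does not address.
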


See {\rm Definition \ref{1.8}} and {\rm \ref{2.4}}(4) for the definitions of ${\rm NS}_{hom,{\bf Q}}^d(X)$, ${\rm Griff}_{\bf Q}^d(X)$, and $J_{a,{\bf Q}}^d(X)$.

\begin{thm}
\label{0.3}
Let $X$ be an $n$-dimensional smooth projective variety over $\mathbf{C}$ with $n\geq 2$. Then for some motive $M_2(X)$ in $\dmeffl$ there is a decomposition
\begin{align*}
& \underline{\rm Hom}({\bf L}^{n-2},M(X))
\\
\cong\, &
{\rm NS}_{hom,{\bf Q}}^2(X)\oplus {\rm Griff}_{\bf Q}^2(X)\oplus J_{a,{\bf Q}}^2(X)\oplus M_2(X)\oplus ( {\rm Pic}_{\bf Q}^0(X)\otimes {\bf L})\oplus {\bf L}^2.
\end{align*}
\end{thm}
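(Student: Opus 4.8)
The plan is to compute $\underline{\rm Hom}({\bf L}^{n-2},M(X))$ by applying the functor $\underline{\rm Hom}({\bf L}^{n-2},-)$ to the conjectural Chow-K\"unneth decomposition $M(X)=\bigoplus_{i=0}^{2n}M_i(X)$ and reading off the graded pieces. Since internal hom is additive in the second variable, it suffices to analyse each $\underline{\rm Hom}({\bf L}^{n-2},M_i(X))$ in isolation, and I expect only the five top weights $i\in\{2n-4,2n-3,2n-2,2n-1,2n\}$ to contribute: the piece in weight $i$ realises to $H^i(X)(n-2)$, a structure of weight $i-2(n-2)$, and for $i\le 2n-5$ this is pure of negative weight, so the corresponding slice vanishes and $\underline{\rm Hom}({\bf L}^{n-2},M_i(X))=0$.

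Next I would pin down the two top pieces by cancellation. From $M_{2n}(X)={\bf L}^n={\bf L}^{n-2}\otimes{\bf L}^2$ one gets $\underline{\rm Hom}({\bf L}^{n-2},M_{2n}(X))={\bf L}^2$; and from the identification $M_{2n-1}(X)\cong{\rm Pic}_{\bf Q}^0(X)\otimes{\bf L}^{n-1}$ implicit in {\rm(\ref{0.8})} (recover it from $\underline{\rm Hom}({\bf L}^{n-1},M_{2n-1}(X))={\rm Pic}_{\bf Q}^0(X)$ by cancellation, using that $M_{2n-1}(X)$ is divisible by ${\bf L}^{n-1}$) one gets $\underline{\rm Hom}({\bf L}^{n-2},M_{2n-1}(X))={\rm Pic}_{\bf Q}^0(X)\otimes{\bf L}$. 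Both come from the summand $M_{2n-1}(X)\oplus M_{2n}(X)$ of $M(X)$, whose projectors exist unconditionally, so these split off $\underline{\rm Hom}({\bf L}^{n-2},M(X))$ and are orthogonal to everything else.

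The remaining three weights carry the arithmetic content. I would \emph{define} $M_2(X)^{*}:=\underline{\rm Hom}({\bf L}^{n-2},M_{2n-2}(X))$, which is the unspecified motive in the statement; no further identification is required. For the weight-$1$ and weight-$0$ pieces I would invoke Theorem~\ref{0.1}: part (2) identifies $\underline{\rm Hom}({\bf L}^{n-2},M_{2n-3}(X))$ with $J_{a,{\bf Q}}^2(X)$, and part (1) identifies $\underline{\rm Hom}({\bf L}^{n-2},M_{2n-4}(X))$ with ${\rm NS}_{hom,{\bf Q}}^2(X)\oplus{\rm Griff}_{\bf Q}^2(X)$. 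Assembling all five graded pieces then yields
\[
\underline{\rm Hom}({\bf L}^{n-2},M(X))={\rm NS}_{hom,{\bf Q}}^2(X)\oplus{\rm Griff}_{\bf Q}^2(X)\oplus J_{a,{\bf Q}}^2(X)\oplus M_2(X)^{*}\oplus({\rm Pic}_{\bf Q}^0(X)\otimes{\bf L})\oplus{\bf L}^2 ,
\]
as desired; note that for $n=2$ this is exactly the (unconditional) Chow-K\"unneth decomposition of a surface, with ${\rm Griff}_{\bf Q}^2(X)=0$.

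The main obstacle is the last identification step. Theorem~\ref{0.1} only produces ${\rm NS}_{hom,{\bf Q}}^2(X)\oplus{\rm Griff}_{\bf Q}^2(X)$ and $J_{a,{\bf Q}}^2(X)$ as direct summands of the \emph{whole} object $\underline{\rm Hom}({\bf L}^{n-2},M(X))$, whereas I need them to be exactly the Chow-K\"unneth graded pieces in weights $0$ and $1$. The classical parts can be matched through realisation --- weight $0$ sees the codimension-$2$ algebraic cohomology classes ${\rm NS}_{hom,{\bf Q}}^2(X)$ and weight $1$ sees the algebraic intermediate Jacobian $J_{a,{\bf Q}}^2(X)$ --- but the summand ${\rm Griff}_{\bf Q}^2(X)$ is invisible to every realisation, so showing that it sits precisely in the weight-$0$ piece, and does not leak into $M_2(X)^{*}$ or the higher Tate pieces, cannot be checked on realisations alone. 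This forces a genuine orthogonality argument for the idempotents: one must verify that the relevant ${\rm Hom}$-groups in $\dmeffl$ between the low-twist motives ${\rm NS}_{hom,{\bf Q}}^2(X)\oplus{\rm Griff}_{\bf Q}^2(X)$, $J_{a,{\bf Q}}^2(X)$ and the high-twist motives ${\rm Pic}_{\bf Q}^0(X)\otimes{\bf L}$, ${\bf L}^2$ vanish, which I would extract from coniveau and Tate-twist bounds together with the explicit construction of the projectors underlying Theorem~\ref{0.1}.
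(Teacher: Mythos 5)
Your main line of argument has a fundamental gap: it assumes the existence of a Chow--K\"unneth decomposition $M(X)=\bigoplus_i M_i(X)$, which is conjectural for general $X$ (the paper itself calls it ``conjectural'' throughout, e.g.\ in (\ref{0.8}) and (\ref{0.4})), whereas Theorem~\ref{0.3} is an unconditional statement. Several of your intermediate steps are likewise conjectural rather than provable: the vanishing of $\underline{\rm Hom}({\bf L}^{n-2},M_i(X))$ for $i\le 2n-5$ is a Murre-type vanishing conjecture, the identification $M_{2n-1}(X)\cong{\rm Pic}_{\bf Q}^0(X)\otimes{\bf L}^{n-1}$ requires divisibility of $M_{2n-1}(X)$ by ${\bf L}^{n-1}$ which is not known in general, and matching the weight-$0$ and weight-$1$ graded pieces with the summands of Theorem~\ref{0.1} is essentially Conjecture~\ref{4.2} of the paper, not a consequence of Theorem~\ref{0.1}. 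In effect you have outlined why the decomposition \emph{ought} to hold if the full conjectural picture were available, which is the content of (\ref{0.8}) and Section~5, not a proof of the theorem.

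The unconditional route, which you gesture at only in your final paragraph, is the one the paper takes in (\ref{3.2}): one never decomposes $M(X)$ at all. Instead one exhibits each of ${\bf L}^2$, ${\rm Pic}_{\bf Q}^0(X)\otimes{\bf L}$, ${\rm NS}_{alg,{\bf Q}}^2(X)\cong{\rm NS}_{hom,{\bf Q}}^2(X)\oplus{\rm Griff}_{\bf Q}^2(X)$, and $J_{a,{\bf Q}}^2(X)$ separately as a direct summand of $\underline{\rm Hom}({\bf L}^{n-2},M(X))$ --- the first two via the duality $\underline{\rm Hom}(M(X),{\bf L}^n)\cong M(X)$ of \cite[16.24]{MVW} applied to the unconditional summands ${\bf 1}$ and ${\rm Alb}_{\bf Q}(X)$ of $M(X)$, followed by cancellation \cite[16.25]{MVW}; the last two by Theorem~\ref{0.1}. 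One then shows these four projectors can be made orthogonal by verifying one-sided ${\rm Hom}$-vanishing between the pieces (Lemma~\ref{3.1}), and the key input for that vanishing is \cite[7.3.10]{KMP}: each of the four motives is isomorphic to a Chow--K\"unneth component $M_j(S)$ of a smooth projective \emph{surface} $S$, where the Chow--K\"unneth decomposition is unconditionally known. Finally $M_2(X)^*$ is \emph{defined} as the complementary summand, not as $\underline{\rm Hom}({\bf L}^{n-2},M_{2n-2}(X))$. If you replace your conditional skeleton by this argument --- which is close to what you propose as the fix for your ``main obstacle,'' but with the coniveau/Tate-twist bounds replaced by the surface-motive computation of Kahn--Murre--Pedrini --- you recover the paper's proof.
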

This is our definition of $M_2(X)$.
Our definition of $M_{2n-2}(X)$ is
\[
M_{2n-2}(X):=M_2(X)\otimes {\bf L}^{n-2}.
\]
What we do not know is whether $M_2(X)$ and $M_{2n-2}(X)$ are direct summands of $M(X)$ or not.
We do not expect that this question is easily reachable since a positive answer would lead to the proof of the K\"unneth type standard conjecture for smooth projective threefolds over ${\bf C}$.
What we hope in the future is to explore the properties of the motives $M_2(X)$ and $M_{2n-2}(X)$ that may be useful for understanding the motivic structures inherited in $X$.

\begin{none}\label{0.9}
 {\it Organization of the paper.} In Section 2, we prove Theorem \ref{0.1}(1) by constructing a morphism $\underline{\rm Hom}(\mathbf{L}^{n-d},M(X))\rightarrow {\rm NS}_{alg,{\bf Q}}^d(X)$ and its section. In Section 3, we prove Theorem \ref{0.1}(2) by constructing a morphism $\underline{\rm Hom}(\mathbf{L}^{n-d},M(X))\rightarrow J_{a,{\bf Q}}^d(X)$ and its section. In Section 4, we prove Theorem \ref{0.3} by constructing the other pieces and using \cite[Theorem 7.3.10]{MR2187153}. In Section 5, we discuss some conjectures.
\end{none}
\begin{none}\label{0.10}
 {\it Conventions and notations.}
 \begin{enumerate}[(1)]
  \item For every complex analytic variety or a scheme $T$ over ${\bf C}$, let ${\rm cl}(T)$ denote the set of closed points of $T$.
  \item Let $Sm/{\bf C}$ denote the category of smooth schemes over ${\bf C}$.
  \item For any ${\bf Q}$-vector space $V$, we also view it as an object in $\dmeffl$.
  \item For every abelian variety $A$, we also view it as an object in $\dmeff$ or $\dmeffl$ by using \cite{MR2102056}.
 \end{enumerate}
\end{none}
\section{Proof of Theorem \ref{0.1}(1)}
\begin{df}\label{1.8}
For all smooth scheme $X$ over ${\bf C}$ and integer $d\geq 0$ we set
\begin{gather*}
CH_{alg}^d(X):=\{Z\in CH^d(X):Z\sim_{alg}0\},
\\
CH_{hom}^d(X):=\{Z\in CH^d(X):Z\sim_{hom}0\},
\\
{\rm NS}_{alg}^d(X):=CH^d(X)/CH_{alg}^d(X),
\;
{\rm NS}_{hom}^d(X):=CH^d(X)/CH_{hom}^d(X),
\\
{\rm Griff}^d(X):=CH_{hom}^d(X)/CH_{alg}^d(X).
\end{gather*}
Here, $\sim_{alg}$ (resp.\ $\sim_{hom}$) denotes the algebraic equivalence relation (resp.\ homological equivalence relation for the Betti cohomology).
Let
\[
CH_{alg,{\bf Q}}^d(X),
\;
CH_{hom,{\bf Q}}^d(X),
\;
{\rm NS}_{alg,{\bf Q}}^d(X),
\;
{\rm NS}_{hom,{\bf Q}}^d(X),
\;
{\rm Griff}_{\bf Q}^d(X)
\]
denote the corresponding ones defined for ${\bf Q}$-coefficient.
\end{df}

\begin{df}\label{1.1}
For all smooth schemes $X$ and $Y$ over ${\bf C}$ and integer $d\geq 0$ we set
\[CH_{X}^d(Y):=CH^d(Y\times X).\]
When $Y$ is connected, we set
\begin{gather*}
CH_{alg,X}^d(Y):=\{Z\in CH^d(Y\times X):i_y^*Z\sim_{alg} 0\},
\\
CH_{hom,X}^d(Y):=\{Z\in CH^d(Y\times X):i_y^*Z\sim_{hom} 0\},
\\
{\rm NS}_{alg,X}^d(Y):=CH_X^d(Y)/CH_{alg,X}^d(Y),
\;
{\rm NS}_{hom,X}^d(Y):=CH_X^d(Y)/CH_{hom,X}^d(Y).
\end{gather*}
Here, $y$ is any closed point of $Y$, and $i_y$ is the obviously closed immersion $y\times X\rightarrow Y\times X$.
We note that the above definitions are independent of the choice $y$ since $i_y^*Z$ and $i_{y'}^*Z$ are algebraically equivalent for arbitrary two closed points $y$ and $y'$ of $Y$.\vspace{0.1in}

When $Y$ is not necessarily connected and has the connected components $\{Y_i\}_{i\in I}$, we set
\[CH_{alg,X}^d(Y):=\bigoplus_{i\in I}CH_{alg,X}^d(Y_i).\]
We define $CH_{hom,X}^d(Y)$, ${\rm NS}_{alg,X}^d(Y)$, and ${\rm NS}_{hom,X}^d(Y)$ similarly.
We consider
\[
CH_X^d,\;
CH_{alg,X}^d,\;
CH_{hom,X}^d,\;
{\rm NS}_{alg,X}^d,\;
{\rm NS}_{hom,X}^d
\]
as presheaves with transfers on $Sm/{\bf C}$.
Let
\[CH_{X,{\bf Q}}^d,\;\;CH_{alg,X,{\bf Q}}^d,\;\;CH_{hom,X,{\bf Q}}^d,\;\;{\rm NS}_{alg,X,{\bf Q}}^d,\;\;{\rm NS}_{hom,X,{\bf Q}}^d\]
denote the corresponding ones defined for ${\bf Q}$-coefficient.
\end{df}
\begin{prop}\label{1.2}
Let $X$ and $Y$ be smooth schemes over ${\bf C}$.
For every closed point $i_y\colon y={\rm Spec}\,{\bf C}\to Y$ and integer $d\geq 0$ the homomorphisms
\[i_y^*\colon {\rm NS}_{alg,X}^d(Y)\rightarrow {\rm NS}_{alg,X}^d({\rm Spec}\,{\bf C}),\;
i_y^*\colon {\rm NS}_{hom,X}^d(Y)\rightarrow {\rm NS}_{hom,X}^d({\rm Spec}\,{\bf C})\]
are isomorphisms.
\end{prop}
\begin{proof}
The homomorphisms are injective since the kernels of the homomorphisms
\[CH^d(Y\times X)\rightarrow {\rm NS}_{alg}^d(X),\; CH^d(Y\times X)\rightarrow {\rm NS}_{hom}^d(X)\]
are given by
\[\{Z\in CH^d(Y\times X):i_y^*Z\in CH_{alg}^d(X)\}, \{Z\in CH^d(Y\times X):i_y^*Z\in CH_{hom}^d(X)\}.\]
The homomorphisms are surjective since $i_y^*\colon CH^d(Y\times X)\rightarrow CH^d(X)$ is surjective.
\end{proof}

\begin{cor}\label{1.3}
The presheaves ${\rm NS}_{alg,X}^d$ and ${\rm NS}_{hom,X}^d$ on $Sm/X$ are constant Nisnevich sheaves with transfers associated to ${\rm NS}_{alg}^d(X)$ and ${\rm NS}_{hom}^d(X)$ respectively.
\end{cor}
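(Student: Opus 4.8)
The plan is to exhibit ${\rm NS}_{alg,X}^d$ as isomorphic, as a presheaf with transfers, to the constant Nisnevich sheaf with transfers $\underline{{\rm NS}_{alg}^d(X)}$ associated to the group ${\rm NS}_{alg}^d(X)$, and then to read off that it is a sheaf because its target is. For a connected object $Y$ and a closed point $y\in {\rm cl}(Y)$, Proposition (\ref{1.2}) provides an isomorphism $i_y^*\colon {\rm NS}_{alg,X}^d(Y)\xrightarrow{\sim}{\rm NS}_{alg,X}^d({\rm Spec}\,{\bf C})={\rm NS}_{alg}^d(X)$ which, by the remark in (\ref{1.1}), does not depend on $y$. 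Since ${\rm NS}_{alg,X}^d$ is by definition additive over connected components and the constant sheaf has value $\bigoplus_{\pi_0(Y)}{\rm NS}_{alg}^d(X)$ on $Y$ (a finite sum, as every object of the site has finitely many components), these maps glue to an objectwise isomorphism $\phi\colon {\rm NS}_{alg,X}^d\xrightarrow{\sim}\underline{{\rm NS}_{alg}^d(X)}$.

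It remains to verify that $\phi$ is a morphism of presheaves with transfers. Naturality for an ordinary morphism $f\colon Y'\to Y$ of connected objects is immediate from the identity $i_{y'}^*f^*=i_{f(y')}^*$ on cycles, which turns $f^*$ into the identity of ${\rm NS}_{alg}^d(X)$, the restriction map of the constant sheaf. The substantive point, which I expect to be the main obstacle, is compatibility with transfers: one must check that the action of a finite correspondence on ${\rm NS}_{alg,X}^d$ corresponds under $\phi$ to multiplication by its degree, the transfer on the constant sheaf. To do so I would take a finite correspondence $\alpha$ from a connected $Y'$ to a connected $Y$, finite surjective of degree $m$ over $Y'$, and rewrite $i_{y'}^*\alpha^*$ as pullback along the zero-cycle $\alpha_*[y']=\sum_j n_j[y_j]$ on $Y$, with $\sum_j n_j=m$. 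Then $i_{y'}^*\alpha^*Z=\sum_j n_j\,i_{y_j}^*Z$, and the remark in (\ref{1.1}) that $i_{y_j}^*Z$ and $i_y^*Z$ are algebraically equivalent gives $i_{y'}^*\alpha^*Z=m\,i_y^*Z$ in ${\rm NS}_{alg}^d(X)$, as required.

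Hence $\phi$ is an isomorphism of presheaves with transfers onto a Nisnevich sheaf, so ${\rm NS}_{alg,X}^d$ is itself the constant Nisnevich sheaf with transfers associated to ${\rm NS}_{alg}^d(X)$. The identical argument, replacing $\sim_{alg}$ by $\sim_{hom}$ and using that the cycles $i_{y_j}^*Z$ and $i_y^*Z$ are in particular homologically equivalent, identifies ${\rm NS}_{hom,X}^d$ with the constant sheaf associated to ${\rm NS}_{hom}^d(X)$.
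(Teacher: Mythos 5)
Your proposal is correct and takes essentially the same route as the paper: both use Proposition (\ref{1.2}) to identify the sections over each connected $Y$ with ${\rm NS}_{alg}^d(X)$ (resp.\ ${\rm NS}_{hom}^d(X)$) independently of the chosen point, and thereby recognize the presheaf as the constant Nisnevich sheaf with transfers. The only difference is that you explicitly verify compatibility with transfers (a degree-$m$ correspondence acting as multiplication by $m$), a point the paper's proof asserts without detail; your check is sound and mirrors the computation the paper later performs in the proof of (\ref{2.2}).
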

\begin{proof}
Let $Y$ be an integral smooth variety over ${\bf C}$ with the structure morphism $p\colon Y\rightarrow {\rm Spec}\,{\bf C}$.
The homomorphism
\[
p^*
\colon
{\rm NS}_{alg,X}^d({\rm Spec}\,{\bf C})
\to
{\rm NS}_{alg,X}^d(Y)
\]
is a section of the homomorphism
\[
i_y^*\colon {\rm NS}_{alg,X}^d(Y)\rightarrow {\rm NS}_{alg,X}^d({\rm Spec}\,{\bf C}),
\]
where $y$ is a closed point of $Y$ and $i_y\colon y\to Y$ is the closed immersion.
Since $i_y^*$ is an isomorphism owing to Proposition \ref{1.2}, $p^*$ is an isomorphism too.
It follows that ${\rm NS}_{alg,X}^d$ is a constant Zariski sheaf associated with
\[
{\rm NS}_{alg,X}^d({\rm Spec}\,{\bf C})= {\rm NS}_{alg}^d(X).
\]
The proof for ${\rm NS}_{hom,X}^d$ is similar.
\end{proof}
\begin{none}\label{1.4}
Note that Corollary \ref{1.3} also holds for ${\bf Q}$-coefficient. Thus from now, we use the notations
\[
{\rm NS}_{alg,{\bf Q}}^d(X),\;\;{\rm NS}_{hom,{\bf Q}}^d(X)
\]
instead of ${\rm NS}_{alg,X,{\bf Q}}^d$ and ${\rm NS}_{hom,X,{\bf Q}}^d$ following the convention in \ref{0.10}(3).
\end{none}
\begin{df}\label{1.5}
For every $i\in {\bf Z}$, let
\[
h_i\colon \dmeff\rightarrow {\rm Sh}^{tr}(Sm/{\bf C})
\]
denote the homology functor for the homotopy $t$-structure defined in \cite[Definition 3.1]{MR2735752}. Here, ${\rm Sh}^{tr}(Sm/{\bf C})$ denotes the category of sheaves with transfers on $Sm/{\bf C}$ with ${\bf Q}$-coefficient.
Let $\tau_{\geq i}$ denote the homological truncation functor associated with the homotopy $t$-structure.
\end{df}
\begin{none}\label{1.6}
Let $X$ be an $n$-dimensional smooth projective variety over ${\bf C}$.
As in \cite[Section A.3]{MR2473327} we have
\[
h_i(\underline{\rm Hom}({\bf L}^{n-d},M(X)))
\cong
\left\{
\begin{array}{ll}
0&\text{if }i<0,
\\
CH_{X}^d&\text{if }i=0,
\end{array}
\right.
\]
where $\underline{\rm Hom}$ denotes the internal Hom in $\dmeff$.
From this we can make morphisms
\begin{equation}
\label{1.6.4}
\begin{split}
&\underline{\rm Hom}({\bf L}^{n-d},M(X))
\xrightarrow{\sim}
\tau_{\geq 0}\underline{\rm Hom}({\bf L}^{n-d},M(X)))
\\
\to &
h_0(\underline{\rm Hom}({\bf L}^{n-d},M(X)))
\xrightarrow{\sim}
CH_{X}^d.
\end{split}
\end{equation}
There is a morphism
\begin{equation}
\label{1.6.3}
CH_{X}^d \rightarrow {\rm NS}_{alg,X}^d={\rm NS}_{alg}^d(X)
\end{equation}
obtained by taking the quotient of $CH^d(Y\times X)$ for $Y\in Sm/{\bf C}$.\vspace{0.1in}

Combine \eqref{1.6.4} and \eqref{1.6.3} to have a morphism
\begin{equation}\label{1.6.2}
\underline{\rm Hom}({\bf L}^{n-d},M(X))\rightarrow {\rm NS}_{alg}^d(X).
\end{equation}
The ${\bf Q}$-coefficient version is
\begin{equation}\label{1.6.1}
\underline{\rm Hom}({\bf L}^{n-d},M(X))\rightarrow {\rm NS}_{alg,{\bf Q}}^d(X).
\end{equation}
\end{none}
\begin{prop}\label{1.7}
Let $X$ be an $n$-dimensional smooth projective variety over ${\bf C}$.
Then the morphism $\eqref{1.6.1}$ has a section in $\dmeffl$.
\end{prop}
\begin{proof}
Since ${\rm NS}_{alg,{\bf Q}}^d(X)$ is a ${\bf Q}$-vector space, it has a basis $\{a_i\}_{i\in I}$ for some set $I$.
Then ${\rm NS}_{alg,{\bf Q}}^d(X)$ is isomorphic to $\bigoplus_{i\in I}{\bf Q}$.
In $\dmeffl$, there is an isomorphism.
\[
{\rm NS}_{alg,{\bf Q}}^d(X)\cong\bigoplus_{i\in I} {\bf 1}.
\]
  
There are bijections
\begin{equation}\label{1.7.1}
\begin{split}
&{\rm Hom}_{\dmeffl}({\rm NS}_{alg,{\bf Q}}^d(X),\underline{\rm Hom}({\bf L}^{n-d},M(X)))
\\
\cong \,& {\rm Hom}_{\dmeffl}({\rm NS}_{alg,{\bf Q}}^d(X)\otimes {\bf L}^{n-d},M(X))
\\
\cong \,& I\times {\rm Hom}_{\dmeffl}( {\bf L}^{n-d},M(X))
\\
\cong \,& I\times CH_{{\bf Q}}^d(X)
\\
\cong \,& {\rm Hom}_{\rm Set}(I,CH_{{\bf Q}}^d(X)),
\end{split}\end{equation}
where $\rm Set$ denotes the category of sets.
Choose a set
\[
\{b_i\in CH_{\bf Q}^d(X)\}_{i\in I}
\]
such that the image of $b_i$ in ${\rm NS}_{alg,{\bf Q}}^d(X)$ is $a_i$. Via (\ref{1.7.1}) the function $I\rightarrow CH^d(X)$ sending $i$ to $b_i$ corresponds to a section of (\ref{1.6.1}).
\end{proof}
\begin{none}\label{1.9}
The quotient homomorphism ${\rm NS}_{alg,{\bf Q}}^d(X)\rightarrow {\rm NS}_{hom,{\bf Q}}^d(X)$ has a section since they are ${\bf Q}$-vector spaces.
Thus there is a decomposition
\[{\rm NS}_{alg,{\bf Q}}^d(X)\cong {\rm NS}_{hom,{\bf Q}}^d(X)\oplus {\rm Griff}_{\bf Q}^d(X),\]
and then the proof of Theorem \ref{0.1}(1) is completed by Proposition \ref{1.7}.
\end{none}
\section{Proof of Theorem \ref{0.1}(2)}
\begin{lemma}\label{2.5}
Let $X$ and $Y$ be schemes of finite type over an algebraically closed field $k$.
Assume that $X$ is integral and each connected component of $Y$ is integral.
If $X$ is quasi-projective over $k$, then for every function $f\colon {\rm cl}(Y)\rightarrow {\rm cl}(X)$ there are at most one morphism $Y\rightarrow X$ of schemes over $k$ naturally inducing $f$.
\end{lemma}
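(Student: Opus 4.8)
The plan is to prove uniqueness by comparing two morphisms $g_1,g_2\colon Y\to X$ over $k$ that induce the same $f$, exploiting that $X$ is separated (quasi-projectivity over a field gives separatedness) and that $Y$ is reduced. First I would reduce to the case where $Y$ is connected, hence integral: since a morphism out of $Y$ is the same datum as a morphism out of each connected component, $\mathrm{cl}(Y)=\coprod_i\mathrm{cl}(Y_i)$, and $f$ splits accordingly, it suffices to treat each integral component separately. So I assume $Y$ integral.

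Then I would form the equalizer $E=Y\times_{X\times_k X}X$ of $g_1$ and $g_2$ along the diagonal $\Delta\colon X\to X\times_k X$. As $X$ is separated, $\Delta$ is a closed immersion, and hence so is $E\hookrightarrow Y$; by construction a morphism $t\colon T\to Y$ factors through $E$ exactly when $g_1t=g_2t$. The goal then becomes to show $E=Y$.

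The main step is to verify $\mathrm{cl}(Y)\subseteq|E|$. Because $k$ is algebraically closed and $Y,X$ are of finite type, the residue field at every closed point is $k$, so $\mathrm{cl}(Y)=Y(k)$ and $\mathrm{cl}(X)=X(k)$. For $y\in\mathrm{cl}(Y)$ with associated $k$-point $\iota_y\colon\mathrm{Spec}\,k\to Y$, the two $k$-points $g_1\iota_y$ and $g_2\iota_y$ of $X$ have the same image $f(y)\in\mathrm{cl}(X)$; since a closed point of $X$ with residue field $k$ supports a unique $k$-point (the induced $k$-algebra map $\kappa(f(y))=k\to k$ must be the identity), we get $g_1\iota_y=g_2\iota_y$, so $\iota_y$ factors through $E$ and $y\in|E|$. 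Finally, $Y$ is Jacobson (being of finite type over a field), so $\mathrm{cl}(Y)$ is dense; hence the closed subset $|E|$ equals $Y$, and since $Y$ is reduced the surjective closed immersion $E\hookrightarrow Y$ has zero ideal sheaf and is therefore an isomorphism, which forces $g_1=g_2$.

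I expect the only genuine subtlety to be the residue-field bookkeeping in the third step, where the set-theoretic equality $g_1(y)=g_2(y)=f(y)$ must be upgraded to an equality of $k$-points of $X$. This is exactly where the hypothesis that $k$ is algebraically closed (so that residue fields at closed points are $k$) and the fact that the morphisms are taken over $k$ are indispensable: without them, distinct endomorphisms of $\mathrm{Spec}\,k$, or Frobenius-type maps, already induce the same function on closed points, so the statement has to be understood for morphisms over $k$.
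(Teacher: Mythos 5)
Your proof is correct, and it is self-contained where the paper's is essentially a citation: after the same first reduction to connected (hence integral) $Y$, the paper further localizes to $Y$ affine and then invokes the ``classical fact'' that quasi-projective varieties over $k$ form a full subcategory of schemes over $k$. Your equalizer argument --- $\Delta$ is a closed immersion because $X$ is separated, $\mathrm{cl}(Y)=Y(k)$ lands in $E$ by the residue-field computation, closed points are dense because $Y$ is Jacobson, and reducedness forces the surjective closed immersion $E\hookrightarrow Y$ to be an isomorphism --- is precisely the standard proof of that classical fact, so the two routes agree mathematically; yours just unwinds the black box. Your version also buys a little extra: it uses only that $X$ is separated and $Y$ reduced (integrality of $X$ and of the components of $Y$ plays no role), and it makes explicit a hypothesis the paper leaves implicit, namely that the morphisms must be taken over $k$. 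As literally stated (``morphism $Y\rightarrow X$ of schemes'') the lemma fails already for $Y=X=\mathrm{Spec}\,\mathbf{C}$, since every field endomorphism of $\mathbf{C}$ induces the same (unique) map on closed points; in the paper's applications the morphisms are indeed $\mathbf{C}$-morphisms, so this is a harmless imprecision in the statement, but your write-up is the one that makes the needed hypothesis visible.
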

\begin{proof}
The question is Zariski local on $Y$, so we may assume $Y$ is integral and affine.
The statement follows from the classical fact that quasi-projective varieties over $k$ form a full subcategory of schemes over $k$.
\end{proof}
\begin{none}\label{2.4}
Here, we review several facts about intermediate Jacobians and Abel-Jacobi maps.
Let $X$ be an $n$-dimensional smooth projective variety over ${\bf C}$, and let $d\in [1,n]$ be an integer.
\begin{enumerate}[(1)]
\item
For every $x\in {\rm cl}(X)$ there is a morphism
\[
{\rm Alb}_{X,x}\colon X\rightarrow {\rm Alb}(X)
\]
mapping $x$ to $0$, which is called the \emph{Albanese map}.
\item
There is a complex torus $J^d(X)$, which is called an intermediate Jacobian.
We refer to \cite[Definition 12.2]{MR2451566} for the definition.
Note that $J^d(X)$ is not necessarily an abelian variety.
\item
There is a homomorphism
\[AJ_X^d\colon CH_{hom}^d(X)\rightarrow {\rm cl}(J^d(X)),\]
which is called the \emph{Abel-Jacobi map}.
We refer to \cite[p.\ 294]{MR2451566} for the definition.
\item
There is a sub-torus $J_a^d(X)$ of $J^d(X)$, which is an abelian variety.
Moreover, there is a commutative diagram
\[
\begin{tikzcd}
CH_{alg}^d(X)\arrow[d]\arrow[r,"AJ_X^d"]&{\rm cl}(J_a^d(X))\arrow[d]\\
CH_{hom}^d(X)\arrow[r,"AJ_X^d"]&{\rm cl}(J^d(X))
\end{tikzcd}
\]
The vertical morphisms are the obvious inclusions of abelian groups, and the upper horizontal morphism is surjective.
When $d=n$, we have
\[
J^n(X)=J_a^n(X)={\rm Alb}(X).
\]
We refer to \cite{MR3141813} for details.
\item Let $Y$ be a smooth scheme over ${\bf C}$.
By \cite[Section 4]{MR0424832}, there is a homomorphism
\[
AJ_{X,Y}^d\colon CH_{alg,X}^d(Y)\rightarrow {\rm Hom}_{{\rm Sch}_{\bf C}}(Y,J_a^d(X))
\]
of abelian groups, where, ${\rm Sch}_{\bf C}$ denotes the category of ${\bf C}$-schemes.
The morphism $AJ_{X,Y}^d(Z)$ sends $Z\in CH_{alg,X}^d(Y)$ to a morphism
\[
Y\rightarrow J_a^d(X)
\]
sending $y\in {\rm cl}(Y)$ to $AJ_X^d(i_y^*Z)$, where $i_y\colon y\times X\rightarrow Y\times X$ denotes the obvious closed immersion.
Observe that by Lemma \ref{2.5} $AJ_{X,Y}^d$ is uniquely determined by the above description.
\item Let $Y$ be an $m$-dimensional smooth projective variety over ${\bf C}$.
If $Z\in CH_X^d(Y)$ is a cycle, then there is a homomorphism
\[
\psi_Z\colon {\rm Alb}(Y)\rightarrow J_a^d(X)
\]
of abelian varieties naturally induced by the morphism of the Hodge structures
\[H^{2m-1}(Y,{\bf Z})\rightarrow H^{2d-1}(X,{\bf Z})\]
naturally induced by $Z$.
We refer to \cite[Theorem 12.17]{MR2451566} for details.
By \cite[Sections 3, 4]{MR0424832}, for all $y\in {\rm cl}(Y)$ and $Z\in CH_{X}^d(Y)$ there is a commutative diagram
\begin{equation}
\label{2.4.1}\begin{tikzcd}
Y\arrow[d,"{\rm Alb}_{Y,y}"']\arrow[rd,"AJ_{X,Y}^d(Z')"]\\
{\rm Alb}(Y)\arrow[r,"\psi_Z"']&J_a^d(X)
\end{tikzcd}
\end{equation}
of schemes.
Here, $Z':=Z-y\times i_y^*Z$, and $i_y\colon y\times X\rightarrow Y\times X$ denotes the obvious closed immersion.
 \end{enumerate}
\end{none}
\begin{prop}\label{2.2}
Let $X$ be an $n$-dimensional smooth projective variety over ${\bf C}$.
Then
\[
AJ_{X,-}^d\colon CH_{alg,X}^d\rightarrow J_a^d(X)
\]
is a morphism of presheaves with transfers on $Sm/{\bf C}$ for every integer $1\leq d\leq n$.
\end{prop}
\begin{proof}
Let $Y$ and $Y'$ be noetherian smooth schemes over ${\bf C}$.
We need to show that for every finite correspondence $V$ from $Y'$ to $Y$ the diagram
\begin{equation}\label{2.2.1}\begin{tikzcd}
CH_{alg,X}(Y)\arrow[r,"AJ_{X,Y}^d"]\arrow[d,"\alpha"']&
{\rm Hom}_{{\rm Sch}_{\bf C}}(Y,J_a^d(X))\arrow[d,"\beta"]
\\
CH_{alg,X}(Y')\arrow[r,"AJ_{X,Y'}^d"]&
{\rm Hom}_{{\rm Sch}_{\bf C}}(Y',J_a^d(X))
\end{tikzcd}
\end{equation}
of abelian groups commutes, where ${\rm Sch}_{\bf C}$ denotes the category of ${\bf C}$-schemes, and $\alpha$ and $\beta$ denote the homomorphisms induced by $V$.
To show this we may assume that $Y$ and $Y'$ are connected and $V$ is an elementary finite correspondence.\vspace{0.1in}

We will review the definition of $\beta$ given in \cite[Lemme 3.1.2]{MR2102056}.
Let $f\colon Y'\rightarrow J_a^d(X)$ be a morphism of schemes. If $V$ has degree $r$, then $V$ naturally induces a morphism
\begin{equation}
\label{2.2.2}
Y'\rightarrow Y^{(r)}.
\end{equation}
There are morphisms
\begin{equation}
\label{2.2.3}
Y^{(r)}\stackrel{f^{(r)}}\rightarrow (J_a^d(X))^{(r)}\stackrel{\rm sum}\rightarrow J_a^d(X)
\end{equation}
of schemes, where $Y^{(r)}$, $(J_a^d(X))^{(r)}$, and $f^{(r)}$ denote the symmetric powers.
The morphism $\beta(f)\colon Y'\to J_a^d(X)$ is obtained by composing \eqref{2.2.2} and \eqref{2.2.3}.\vspace{0.1in}

Let $Z\in CH_{alg,X}^d(Y)$ be an element, and let $y'\in {\rm cl}(Y')$ be a closed point.
Then via $V$, $y'$ corresponds to
\[
a_1y_1+\cdots +a_sy_s
\]
for some $a_1\ldots,a_s\in {\bf N}^+$ and $y_1,\ldots,y_s\in {\rm cl}(Y)$.
By the description in \ref{2.4}(5) $AJ_{X,Y}^d(Z)$ maps $y\in {\rm cl}(Y)$ to $AJ_{X}^d(i_y^*Z)$, where $i_y\colon y\times X\rightarrow Y\times X$ is the obvious closed immersion.
Using the above description of $\beta$ we see that $\beta(AJ_{X,Y}^d(Z))$ maps $y'$ to
\[
a_1AJ_X^d (i_{y_1}^*Z)+\cdots +a_s AJ_X^d(i_{y_s}^*Z).\vspace{0.05in}
\]

Since $i_{y'}^*(\alpha(Z))=a_1i_{y_1}^*Z+\cdots +a_si_{y_s}^*Z$, we see that $AJ_{X,Y'}^d(\alpha (Z))$ maps $y'$ to
\[
AJ_X^d(a_1i_{y_1}^*Z+\cdots +a_si_{y_s}^*Z)=a_1AJ_X^d (i_{y_1}^*Z)+\cdots +a_s AJ_X^d(i_{y_s}^*Z).
\]
Thus $\beta(AJ_{X,Y}(Z))$ and $AJ_{X',Y'}(\alpha(Z))$ maps $y'$ to the same closed point of $J_a^d(X)$.
Apply Lemma \ref{2.5} to deduce that (\ref{2.2.1}) commutes.
\end{proof}

\begin{none}\label{2.3}
By Proposition \ref{2.2}, we can regard
\[AJ_{X,-}^d\colon {\rm CH}_{alg,X}^d\rightarrow J_a^d(X)\]
as a morphism in $\dmeff$. In (\ref{1.6.2}), we have the morphism
\begin{equation}
\label{2.3.1}
\underline{\rm Hom}({\bf L}^{n-d},M(X))\rightarrow {\rm NS}_{alg}^d(X)
\end{equation}
in $\dmeff$.
Let $K$ denote its cocone.\vspace{0.1in}

By \ref{1.6}, there is an isomorphism $h_0(K)\cong{\rm CH}_{alg,X}^d$, and we have the vanishing $h_i(K)=0$ for every $i<0$.
Thus there is an isomorphism
\[
\tau_{\geq 0}(K)
\cong
h_0(K).
\]
Then we have morphisms
\[
K\rightarrow \tau_{\geq 0}(K)\stackrel{\sim}\rightarrow h_0(K)\cong {\rm CH}_{alg,X}^d\stackrel{AJ_{X,Y}^d}\longrightarrow J_{a}^d(X)
\]
in $\dmeff$.
By tensoring with ${\bf Q}$, we have a morphism
\[
\gamma\colon K_{\bf Q}\rightarrow J_{a,{\bf Q}}^d(X)
\]
in $\dmeffl$, where $K_{\bf Q}:=K\otimes {\bf Q}$.
Our next goal is to construct its section in $\dmeffl$.\vspace{0.1in}

In \cite[Section 2.3.3]{MR3141813}, it is shown that there is a smooth projective curve $C$ over {\bf C} (not necessarily connected) and an element $Z\in CH^d(C\times X)$ such that the homomorphism
\[
\psi_Z\colon {\rm Alb}(C)\rightarrow J_a^d(X)
\]
of abelian varieties in \ref{2.4}(6) is surjective.\vspace{0.1in}

The image of the cycle $Z':=Z-y\times i_y^*Z$ in ${\rm NS}_{alg}^d(X)$ is $0$, where $i_y\colon y\times X\rightarrow Y\times X$ is the obvious closed immersion.
Hence $Z'$ gives a morphism
\[
M(C)\to \underline{\rm Hom}({\bf L}^{n-d},M(X))
\]
such that it becomes $0$ after composing with \eqref{1.6.1}.
In other words, we have a morphism
\[
M(C)\to K_{\bf Q}.
\]
From (\ref{2.4.1}), we have a commutative diagram
\[\begin{tikzcd}
M(C)\arrow[d]\arrow[r]&K_{\bf Q}\arrow[d,"\gamma"]\\
{\rm Alb}_{\bf Q}(C)\arrow[r,"\psi_{Z,{\bf Q}}"]&J_{a,{\bf Q}}^d(X)
\end{tikzcd}\]
in $\dmeffl$, where the left vertical morphism is the Albanese map.\vspace{0.1in}

Up to isogeny the category of abelian varieties over ${\bf C}$ is semi-simple, so $\psi_{Z,{\bf Q}}$ has a section since $\psi_Z$ is surjective.
The composition $M(C)\rightarrow J_{a,{\bf Q}}^d(X)$ has a section since ${\rm Alb}_{\bf Q}(C)$ is a direct summand of $M(C)$ in $\dmeffl$.
Thus $\gamma$ has a section.\vspace{0.1in}

This completes the proof of Theorem \ref{0.1}(2) since $K_{\bf Q}$ is a direct summand of $\underline{\rm Hom}({\bf L}^{n-d},M(X))$ by Theorem \ref{0.1}(1).
\end{none}

\section{Proof of Theorem \ref{0.3}}
\begin{lemma}\label{3.1}
Let $M$ be an object of $\dmeffl$, and let $\alpha,\beta\colon M\rightarrow M$ be projectors.
For abbreviation, we set
\[
F={\rm im}\,\alpha,\quad G={\rm im}\,\beta.
\]
If ${\rm Hom}_{\dmeffl}(G,F)=0$, Then $F\oplus G$ is a direct summand of $M$.
\end{lemma}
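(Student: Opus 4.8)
The plan is to produce an explicit idempotent on $M$ whose image is $F\oplus G$, built out of the two given projectors $\alpha$ and $\beta$ together with the maps that realize $F$ and $G$ as summands. Recall that $F=\mathrm{im}\,\alpha$ means we have a section-retraction pair $i_F:F\rightarrow M$, $r_F:M\rightarrow F$ with $r_Fi_F=\mathrm{id}_F$ and $i_Fr_F=\alpha$, and similarly $i_G,r_G$ for $G$ with $i_Gr_G=\beta$. To exhibit $F\oplus G$ as a summand it suffices to give a morphism $j:F\oplus G\rightarrow M$ and a morphism $q:M\rightarrow F\oplus G$ with $qj=\mathrm{id}_{F\oplus G}$; then $jq$ is the desired projector and $F\oplus G\cong\mathrm{im}(jq)$ is a direct summand.

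First I would take the obvious candidate $j=(i_F,i_G):F\oplus G\rightarrow M$ and look for $q$ of the form $\binom{r_F'}{r_G'}$. The naive choice $q=\binom{r_F}{r_G}$ fails because the off-diagonal composites $r_Fi_G:G\rightarrow F$ and $r_Gi_F:F\rightarrow G$ need not vanish. Here is where the hypothesis $\mathrm{Hom}_{\dmeffl}(G,F)=0$ enters: it forces $r_Fi_G=0$ automatically, since $r_Fi_G$ is a morphism $G\rightarrow F$. The remaining nuisance is the other off-diagonal term $r_Gi_F:F\rightarrow G$, which the hypothesis does \emph{not} kill directly. The key step is to correct the retraction on the $F$-factor so as to cancel this term: I would set
\[
  q=\begin{pmatrix} r_F & 0\\ -r_Gi_Fr_F & r_G\end{pmatrix}
  =\begin{pmatrix} r_F\\ r_G-r_Gi_Fr_F\end{pmatrix}=\begin{pmatrix} r_F\\ r_G(\mathrm{id}-\alpha)\end{pmatrix},
\]
reading $q$ as a column whose first entry lands in $F$ and whose second lands in $G$.

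Now I would verify $qj=\mathrm{id}_{F\oplus G}$ by computing the four matrix entries of the composite $qj$. The $(F,F)$-entry is $r_Fi_F=\mathrm{id}_F$; the $(G,F)$-entry is $r_Fi_G=0$ by the $\mathrm{Hom}$-vanishing hypothesis; the $(G,G)$-entry is $r_Gi_G=\mathrm{id}_G$; and the $(F,G)$-entry is $(r_G-r_Gi_Fr_F)i_F=r_Gi_F-r_Gi_F(r_Fi_F)=r_Gi_F-r_Gi_F=0$, using $r_Fi_F=\mathrm{id}_F$. Hence $qj$ is the identity matrix, so $jq$ is an idempotent endomorphism of $M$ and its image $F\oplus G$ is a direct summand. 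The only genuinely delicate point is the asymmetry between the two off-diagonal terms: one vanishes for formal reasons (the hypothesis) while the other must be actively corrected, and one should check that the correction does not re-break the entry that the hypothesis already fixed — which it does not, precisely because the correction term $-r_Gi_Fr_F$ lands in the $G$-row and leaves the $F$-row untouched. Since $\dmeffl$ is an additive (indeed idempotent-complete, being pseudo-abelian) category, the image of the idempotent $jq$ exists as an object, completing the argument.
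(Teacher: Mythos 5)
Your proposal is correct and is essentially the paper's argument in different packaging: your idempotent $jq=i_Fr_F+i_Gr_G(\mathrm{id}-\alpha)=\alpha+\beta-\beta\alpha$ is exactly the projector the paper constructs, with the hypothesis killing $r_Fi_G$ (equivalently $\alpha\beta=0$) and the correction term handling the other off-diagonal composite. The only difference is presentational — you verify $qj=\mathrm{id}_{F\oplus G}$ entrywise, while the paper checks the idempotent identities for $\alpha+\beta-\beta\alpha$ directly.
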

\begin{proof}
Since ${\rm Hom}_{\dmeffl}(G,F)=0$, we have $\alpha\beta=0$.
Using this, we deduce
\begin{gather*}
\alpha(\beta-\beta\alpha)=\alpha\beta-\alpha\beta\alpha=0,
\quad
(\beta-\beta\alpha)\alpha=\beta\alpha-\beta\alpha^2=0,
\\
(\beta-\beta\alpha)^2=\beta^2-\beta^2\alpha-\beta\alpha\beta+\beta\alpha\beta\alpha=\beta-\beta\alpha.
\end{gather*}
Thus $\beta-\beta\alpha$ is a projector orthogonal to $\alpha$.
Since we have
\begin{gather*}
\beta(\beta-\beta\alpha)\beta=\beta^3-\beta^2\alpha\beta=\beta,
\\
(\beta-\beta\alpha)\beta(\beta-\beta\alpha)
=
\beta^3-\beta\alpha\beta^2-\beta^3\alpha+\beta\alpha\beta^2\alpha=\beta-\beta\alpha,
\end{gather*}
we deduce ${\rm im}\,\beta\cong{\rm im}(\beta-\beta\alpha)$.
Thus $\alpha+\beta-\beta\alpha$ is a projector whose image is isomorphic to $F\oplus G$.
\end{proof}
\begin{none}\label{3.2}
Let $X$ be an $n$-dimensional smooth projective variety over $\mathbf{C}$ with $n\geq 2$, and let $x$ be a closed point of $X$.
Observe that ${\bf 1}$ and ${\rm Alb}_{\bf Q}(X)$ are direct summands of $M(X)$.
The motives
\[
{\bf L}^n\cong \underline{\rm Hom}({\bf 1},{\bf L}^n),\quad {\bf L}^{n-1}\otimes {\rm Pic}_{\bf Q}^0(X)\cong \underline{\rm Hom}({\rm Alb}_{\bf Q}(X),{\bf L}^n)
\]
are direct summands of $\underline{\rm Hom}(M(X),{\bf L}^n)$, which is isomorphic to $M(X)$ by \cite[Theorem 16.24]{MVW}.
Thus using \cite[Theorem 16.25]{MVW}, we deduce that ${\bf L}^2$ and ${\bf L}\otimes {\rm Pic}_{\bf Q}^0(X)$ are direct summands of $\underline{\rm Hom}({\bf L}^{n-2},M(X))$.
By \ref{1.9} there is an isomorphism
\[
{\rm NS}_{hom,{\bf Q}}^2(X)\oplus {\rm Griff}_{\bf Q}^2(X)\cong {\rm NS}_{alg,{\bf Q}}^2(X).
\]
Thus to prove Theorem \ref{0.3}, by virtue of Lemma \ref{3.1} it suffices to show
\begin{gather*}
{\rm Hom}_{\dmeffl}({\bf L}^2,{\rm Pic}_{\bf Q}^0(X)\otimes {\bf L})=0,
\quad
{\rm Hom}_{\dmeffl}({\bf L}^2,J_{a,{\bf Q}}^2(X))=0,
\\
{\rm Hom}_{\dmeffl}({\bf L}^2,{\rm NS}_{alg,{\bf Q}}^2(X))=0,
\\
{\rm Hom}_{\dmeffl}({\rm Pic}_{\bf Q}^0(X)\otimes {\bf L},J_{a,{\bf Q}}^2(X))=0,
\\
{\rm Hom}_{\dmeffl}({\rm Pic}^0(X)\otimes {\bf L},{\rm NS}_{alg,{\bf Q}}^2(X))=0,
\\
{\rm Hom}_{\dmeffl}(J_{a,{\bf Q}}^2(X),{\rm NS}_{alg}^2(X))=0.
\end{gather*}
These follow from \cite[Theorem 7.3.10]{MR2187153} because of the following reasons.
\begin{enumerate}[(i)]
\item The motive ${\bf L}^2$ is isomorphic to $M_4(S_0)$ for some $S_0$.
\item The motive ${\rm Pic}_{\bf Q}^0(X)\otimes {\bf L}$ is isomorphic to $M_3(S_1)$ for some $S_1$.
\item The motive $J_{a,{\bf Q}}^2(X)$ is isomorphic to $M_1(S_2)$ for some $S_2$.
\item The motive ${\rm NS}_{alg,{\bf Q}}^2(X)$ is isomorphic to $M_0(S_3)$ for some $S_3$.
\end{enumerate}
Here, $S_0$, $S_1$, $S_2$, and $S_3$ are (not necessarily connected) smooth projective surfaces over ${\bf C}$. This completes the proof of Theorem \ref{0.3}.
\end{none}

\section{Conjectures}
\begin{df}\label{4.5}
Let $X$ be an $n$-dimensional smooth projective variety over ${\bf C}$, and let $d\in [1,n]$ be an integer. Consider the homomorphism
\[
AJ_{X,{\bf Q}}^d\colon CH_{hom,{\bf Q}}^d(X)\rightarrow {\rm cl}(J_a^d(X))\otimes_{\bf Z}{\bf Q}
\]
of ${\bf Q}$-vector spaces induced by $AJ_X^d$.
We set
\[
CH_{Jac,{\bf Q}}^d(X):={\rm ker}\,AJ_{X,{\bf Q}}^d.
\]
\end{df}
\begin{conj}\label{4.3}
Let $X$ be an $n$-dimensional smooth projective variety over ${\bf C}$ with $n\geq 2$.
Then
\[
CH_{Jac,{\bf Q}}^2(X)\subset CH_{alg,{\bf Q}}^2(X).
\]
\end{conj}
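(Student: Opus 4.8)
The plan is to unwind the definitions and turn the asserted inclusion into an injectivity statement for the Abel--Jacobi map on the Griffiths group. Reading $AJ_{X,{\bf Q}}^2$ through the map $AJ_X^2\colon CH_{hom}^2(X)\to {\rm cl}(J^2(X))$ of (\ref{2.4})(3), the group $CH_{Jac,{\bf Q}}^2(X)$ consists of the homologically trivial classes whose rational Abel--Jacobi invariant vanishes. By (\ref{2.4})(4) the rational Abel--Jacobi map sends $CH_{alg,{\bf Q}}^2(X)$ onto $J_{a,{\bf Q}}^2(X)$, so it induces a homomorphism $\overline{AJ}\colon {\rm Griff}_{\bf Q}^2(X)\to J^2(X)_{\bf Q}/J_{a,{\bf Q}}^2(X)$, and the inclusion $CH_{Jac,{\bf Q}}^2(X)\subseteq CH_{alg,{\bf Q}}^2(X)$ is equivalent to the injectivity of $\overline{AJ}$. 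I would first dispose of $n=2$, where the claim is empty: on a connected surface any two $0$-cycles of equal degree are algebraically equivalent, hence $CH_{alg,{\bf Q}}^2(X)=CH_{hom,{\bf Q}}^2(X)$ and ${\rm Griff}_{\bf Q}^2(X)=0$. The content therefore lies in the range $n\ge 3$.

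For $n\ge 3$ I would attack the injectivity of $\overline{AJ}$ using the codimension-2 structure theory together with the motivic decomposition of (\ref{0.3}). First I would spread a class $Z\in CH_{Jac,{\bf Q}}^2(X)$ out over a smooth base and apply the Bloch--Srinivas decomposition of the diagonal, so as to replace statements about the single class $Z$ by statements about the action of correspondences on $H^3(X,{\bf Q})$; the Abel--Jacobi invariant of a codimension-2 class is governed by $H^3$, and its vanishing should confine the class to the part of the motive seen by ${\rm NS}_{hom,{\bf Q}}^2(X)$, ${\rm Griff}_{\bf Q}^2(X)$ and $J_{a,{\bf Q}}^2(X)$. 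Concretely, in the decomposition of (\ref{0.3}) these three summands record, respectively, the cohomology class, the defect between homological and algebraic equivalence, and the Abel--Jacobi image of algebraically trivial cycles, whereas $M_2(X)^*$ carries the remaining transcendental information. The goal of this step is to show, by the orthogonality arguments of (\ref{3.2}) and \cite[7.3.10]{KMP}, that the contribution of $M_2(X)^*$ to $CH_{\bf Q}^2(X)$ factors through algebraically trivial cycles, which is exactly the injectivity of $\overline{AJ}$.

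The hard part will be this last step: controlling the deepest cycles, those that are simultaneously homologically and Abel--Jacobi trivial, and proving that they are already algebraically trivial. This is precisely where Nori-type non-injectivity phenomena for the Abel--Jacobi map on Griffiths groups obstruct an unconditional argument, and it is why the statement is only conjectural. In terms of the conjectural Bloch--Beilinson filtration $F^\bullet$ on $CH_{\bf Q}^2(X)$ one expects $F^1=CH_{hom,{\bf Q}}^2(X)$ and $F^2=\ker AJ_{X,{\bf Q}}^2=CH_{Jac,{\bf Q}}^2(X)$ with $F^3=0$, so the assertion is the anticipated containment $F^2\subseteq CH_{alg,{\bf Q}}^2(X)$ of the deepest graded piece in the algebraically trivial cycles. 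I therefore expect a genuine proof to require the full Chow--K\"unneth package of (\ref{0.5}) together with Murre's conjectures; without them, ruling out phantom classes that are Abel--Jacobi trivial but not algebraically trivial is the real obstacle.
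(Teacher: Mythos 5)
Your reformulation of the inclusion as injectivity of the induced map $\overline{AJ}\colon{\rm Griff}_{\bf Q}^2(X)\rightarrow {\rm cl}(J^2(X))_{\bf Q}/{\rm cl}(J_{a}^2(X))_{\bf Q}$ is correct, as is your observation that the case $n=2$ is vacuous; and you are right that the statement cannot be proved unconditionally. But the paper does give a concrete conditional argument in (\ref{4.4}), and your proposal never arrives at it: after the Bloch--Srinivas step you state the goal (``the contribution of $M_2(X)^*$ factors through algebraically trivial cycles, which is exactly the injectivity of $\overline{AJ}$'') and then concede that this is the hard part. That is a restatement of the conjecture, not a reduction of it. Moreover, Bloch--Srinivas decomposition of the diagonal is not an available tool here without extra hypotheses on $CH_0(X)$, so that step does not get off the ground for a general $X$.

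The missing mechanism is a reductio via Tate summands. Suppose $Z\in\ker AJ_{X,{\bf Q}}^2$ is not algebraically trivial. Its nonzero class spans a direct summand ${\bf 1}$ of the ${\bf Q}$-vector space object ${\rm NS}_{alg,{\bf Q}}^2(X)$, hence of $\underline{\rm Hom}({\bf L}^{n-2},M(X))$ by (\ref{0.1}(1)). Decomposing against a conjectural Chow--K\"unneth decomposition, the expected Bloch--Beilinson identifications (\ref{4.4.1}) together with the hypothesis $AJ_{X,{\bf Q}}^2(Z)=0$ kill the components in every $M_r(X)$ with $r\neq 2n-2$, so ${\bf 1}$ is a direct summand of $\underline{\rm Hom}({\bf L}^{n-2},M_{2n-2}(X))$. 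Using the expected isomorphism $M_{2n-2}(X)\cong {\bf L}^{n-2}\otimes M_2(X)$ and cancellation \cite[16.25]{MVW}, this makes ${\bf 1}$ a direct summand of $M_2(X)$, producing a nonzero morphism $M_2(X)\rightarrow{\bf 1}$ and contradicting the vanishing conjecture \cite[5.8]{Jan}. This is the step your sketch is missing: the passage from a non--algebraically-trivial Abel--Jacobi-trivial class to a forbidden Tate summand of $M_2(X)$. Your appeal to ``the full Chow--K\"unneth package of (\ref{0.5}) together with Murre's conjectures'' points at the right circle of ideas but does not identify this argument or the specific inputs ((\ref{4.4.1}), $M_{2n-2}(X)\cong{\bf L}^{n-2}\otimes M_2(X)$, and \cite[5.8]{Jan}) that make it run.
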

\begin{none}\label{4.4}
Let us show Conjecture \ref{4.3} using some other conjectures.
We need to show that any element in the kernel of
\[
AJ_{X,{\bf Q}}^2\colon CH_{hom,{\bf Q}}^2(X)\rightarrow {\rm cl}(J_a^2(X))\otimes_{\bf Z}{\bf Q}
\]
is algebraically equivalent to $0$.
Suppose that $M(X)$ has a Chow-K\"unneth decomposition
\[
M_0(X)\oplus \cdots \oplus M_{2n}(X)
\]
in $\dmeffl$.
The conjectural Bloch-Beilinson filtration on $CH^2(X)$ expects that
\begin{equation}
\label{4.4.1}
{\rm Hom}_{\dmeffl}({\bf L}^{n-2},M_r(X))
\cong
\left\{
\begin{array}{ll}
{\rm ker}\,AJ_{X,{\bf Q}}^2 & \textrm{if }r={2n-2}
\\
0 & \textrm{if }r<2n-4 \textrm{ or }r>2n-1.
\end{array}
\right.
\end{equation}

If some nonzero element in the kernel of $AJ_{X,{\bf Q}}^2$ is not algebraically equivalent to $0$, then it gives a direct summand ${\bf 1}$ of ${\rm NS}_{alg,{\bf Q}}^2(X)$, which is also a direct summand of $\underline{\rm Hom}({\bf L}^{n-2},M(X))$ in $\dmeffl$ by Theorem \ref{0.1}(1).
The induced morphism
\[
{\bf 1}\rightarrow \underline{\rm Hom}({\bf L}^{n-2},M_0(X)\oplus M_1(X)\oplus \cdots \oplus M_{2n-3}(X)\oplus M_{2n-1}(X)\oplus M_{2n}(X))
\]
in $\dmeffl$ is $0$ by \eqref{4.4.1} and the assumption that the element is in the kernel of $AJ_{X,{\bf Q}}^2$.
Thus we deduce that ${\bf 1}$ is a direct summand of $\underline{\rm Hom}({\bf L}^{n-2},M_{2n-2}(X))$.
We expect an isomorphism
\[
M_{2n-2}(X)\cong {\bf L}^{n-2}\otimes M_2(X)
\]
in $\dmeffl$.
By the cancellation theorem \cite[Theorem 16.25]{MVW}, we deduce that ${\bf 1}$ is a direct summand of
\[
\underline{\rm Hom}({\bf L}^{n-2},M_{2n-2}(X))\cong \underline{\rm Hom}({\bf L}^{n-2},{\bf L}^{n-2}\otimes M_2(X))\cong M_2(X).
\]
In particular, we have a nonzero morphism $M_2(X)\rightarrow {\bf 1}$ in $\dmeff$.
This contradicts to the conjecture \cite[Proposition 5.8]{MR1265533}.
\end{none}
\begin{none}
\label{4.6}
Let $X$ be an $n$-dimensional smooth projective variety over ${\bf C}$, and let
\[
M(X)=M_0(X)\oplus \cdots\oplus M_{2n}(X)
\]
be a conjectural Chow-K\"unneth decomposition.
Then we have
\begin{gather*}
\underline{\rm Hom}({\bf L}^{n-2},M_{2n}(X))
\simeq
{\bf L}^2,
\quad
\underline{\rm Hom}({\bf L}^{n-2},M_{2n-1}(X))
\simeq
{\rm Pic}_{\bf Q}^0(X)\otimes {\bf L},
\\
\underline{\rm Hom}({\bf L}^{n-2},M_{2n-2}(X))
\simeq
M_2(X).
\end{gather*}
We expect that the remaining components in the decomposition in Theorem \ref{0.3} can be identified as
\begin{gather*}
\underline{\rm Hom}({\bf L}^{n-2},M_{2n-3}(X))
\simeq
{\rm Griff}_{\bf Q}^2(X)\oplus J_{a,{\bf Q}}^2(X),
\\
\underline{\rm Hom}({\bf L}^{n-2},M_{2n-4}(X))
\simeq
{\rm NS}_{hom,{\bf Q}}^2(X).
\end{gather*}
We generalize this as follows.
\end{none}
\begin{conj}\label{4.2}
Let $X$ be an $n$-dimensional smooth projective variety over ${\bf C}$, and let
\[
M(X)=M_0(X)\oplus \cdots\oplus M_{2n}(X)
\]
be a conjectural Chow-K\"unneth decomposition.
Then there are isomorphisms
\begin{align*}
&\underline{\rm Hom}({\bf L}^{n-d},M_{2n-2d+1}(X))
\\
\cong\, &CH_{hom,\bf Q}^d(X)/(CH_{Jac,{\bf Q}}^d(X)+CH_{alg,{\bf Q}}^d(X)))\oplus J_{a,{\bf Q}}^d(X),
\end{align*}
\[
\underline{\rm Hom}({\bf L}^{n-d},M_{2n-2d}(X))
\cong
{\rm NS}_{hom,{\bf Q}}^d(X),
\]
for every integer $d\in [1,d]$.
\end{conj}
\bibliography{../bib}
\bibliographystyle{siam}
\end{document}